\documentclass[a4paper,11pt]{amsart}
\usepackage{amssymb}
\usepackage[dvips]{graphicx}
\usepackage{amscd}
\setlength{\textwidth}{400pt}
\oddsidemargin = 31pt
\evensidemargin = 31pt

\newcommand{\comm}[1]{}

\def\dist{\operatorname{dist}}

\def\({\left(}
\def\){\right)}

\def\raw{\rightarrow}

\def\no={\neq}

\def\B{{\mathbb B}}
\def\C{{\mathbb C}}

\def\P{{\mathbb P}}

\def\NN{{\mathcal N}}

\def\de{\delta}

\def\vep{\varepsilon}

\def\om{\omega}

\def\La{\Lambda}

\theoremstyle{plain}
\newtheorem{Main}{Theorem}

\newtheorem{Thm}{Theorem}[section]

\newtheorem{Lem}[Thm]{Lemma}

\theoremstyle{remark}

\newtheorem{Def}[Thm]{Definition}

\begin{document}
\begin{center}
\end{center}
\title[Rational Misiurewicz maps for which $J(f) \neq \hat{\C}$]
{Rational Misiurewicz maps for which the Julia set is not the whole sphere}
\author{Magnus Aspenberg}
\address{Mathematisches Seminar, Christian-Albrechts Univertit\"at zu
  Kiel, Ludewig-Meyn Str.4, 24 098 Kiel, Germany}
\email{aspenberg@math.uni-kiel.de}
\thanks{The author gratefully acknowledges funding from the Research
  Training Network CODY of the European Commission}
\begin{abstract}
We show that Misiurewicz maps for which the Julia set is not the whole
sphere are Lebesgue density points of hyperbolic maps.
\end{abstract}

\maketitle

\section{Introduction}

In \cite{RL2} by Rivera-Letelier, it is
shown that Misiurewicz maps for unicritical polynomials of the form
$f_c(z)=z^d+c$, $c \in \C$, are Lebesgue density points of hyperbolic maps.
This paper extends this result to all Misiurewicz maps in the
space of rational functions of a given degree $d \geq 2$, if the Julia
set is not the whole sphere. i.e. every Misiurewicz map for which
$J(f) \neq \hat{\C}$ is a Lebesgue density point of hyperbolic maps.
The statement is false if
the Julia set is the whole sphere (see e.g. \cite{MA}), because in
this case the Misiurewicz maps are Lebesgue density points of Collet-Eckmann
maps (CE). In addition, these CE-maps have their Julia set
equal to the whole sphere (see also \cite{MR}).

This paper complements \cite{MA4}, where Misiurewicz maps for which
$J(f)=\hat{\C}$ are studied. In particular, it is shown in that paper
that every such Misiurewicz map apart from flexible Latt\'es maps can be
approximated by a hyperbolic map. We get the following measure theoretic characterisation: Let
$f$ be a rational Misiurewicz map. Then if $f$ is not a flexible
Latt\'es map, there is a hyperbolic map arbitrarily close to
$f$. Moreover,

\begin{itemize}
\item if $J(f) = \hat{\C}$, then $f$ is a Lebesgue density point of
  CE-maps,
\item if $J(f) \neq \hat{\C}$, then $f$ is a Lebesgue density point of
  hyperbolic maps.
\end{itemize}

The notion of Misiurewicz maps goes back to the famous paper
\cite{MM} by M. Misiurewicz. In that paper, real maps of an interval are
considered and in the complex case there are some variations of the
definition of Misiurewicz maps (see e.g. \cite{GKS}, \cite{SvS}).
We proceed with the following definition.
First, let $J(f)$ be the Julia set of the function $f$ and $F(f)$ its
Fatou set. The set of critical points is denoted by $Crit(f)$ and
the omega limit set of $x$ is denoted by $\om(x)$.

\begin{Def}
A rational non-hyperbolic map $f$ is a {\em Misiurewicz map} if $f$
has no parabolic periodic points and for
every $c \in Crit(f)$ we have $\om(c) \cap Crit(f) = \emptyset$.
\end{Def}


\begin{Main} \label{mainthm}
If $f$ is a rational Misiurewicz map of degree $d \geq 2$,
for which $J(f) \neq \hat{\C}$, then $f$ is a Lebesgue density point
of hyperbolic maps in the space of rational maps of degree $d$.
\end{Main}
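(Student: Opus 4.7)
The plan is to adapt Rivera-Letelier's approach from \cite{RL2} to the full moduli space of rational maps of degree $d$, exploiting the nonempty Fatou set to absorb perturbed critical orbits. Choose a small holomorphic family $\la \mapsto f_\la$ with $f_0=f$ and $\la$ ranging in a polydisc $\La \sbs \C^N$, realizing a local transversal to conjugacy. The critical points $c_j(\la)$ depend holomorphically on $\la$. Split them into \emph{passive} critical points, whose $f$-orbit lies in an attracting basin (these remain in a basin of $f_\la$ for all small $\la$, and so never obstruct hyperbolicity), and \emph{active} critical points $c_1,\ldots,c_k \in J(f)$. The problem reduces to showing that, for $\la$ in a set of density one at $0$, each active $c_i(\la)$ eventually enters the Fatou set of $f_\la$.

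The Misiurewicz hypothesis makes $E = \oli{\bigcup_{n\geq 0} f^n(\{c_1,\ldots,c_k\})}$ a compact, forward-invariant, uniformly expanding set in $J(f)$, disjoint from $Crit(f)$. Such $E$ admits a unique holomorphic motion $h_\la : E \raw E_\la$ over $\La$, and each $c_i$ has a ``twin continuation'' $\ti c_i(\la) := h_\la(c_i) \in E_\la$ alongside its critical continuation $c_i(\la)$. The key input is a multi-critical \emph{transversality statement}: the holomorphic map
\[
\Phi(\la) \;=\; \bigl(c_1(\la) - \ti c_1(\la),\, \ldots,\, c_k(\la) - \ti c_k(\la)\bigr)
\]
is a submersion at $\la = 0$. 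This is expected to follow from the rigidity theory developed in \cite{MA4} for Misiurewicz maps that are not flexible Latt\`es maps, a case automatically excluded here because flexible Latt\`es maps have $J(f)=\hat{\C}$. After a linear change of coordinates, the components of $\Phi$ may be taken to be the first $k$ coordinates on $\La$.

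Since $J(f) \neq \hat{\C}$, one can fix an open set $W \sbs F(f)$ contained in an attracting basin of $f$ and at positive distance from $E$; the motion $W_\la$ has uniformly positive area for $\la$ near $0$. If each $c_i(\la)$ has a forward iterate in $W_\la$, then $f_\la$ is hyperbolic. Pulling $W$ back through $E$ along the orbit of $c_i$ by inverse branches of $f$---which extend to all nearby $f_\la$ with uniform Koebe distortion because of the hyperbolicity of $E$ and the fact that the branches avoid $Crit(f_\la)$---and converting the resulting phase-space estimates into parameter-space estimates via the transversality of $\Phi$ and Koebe distortion in $\la$, one obtains inside any small polydisc around $\la=0$ a subset of hyperbolic parameters of relative measure tending to $1$. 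A shrinking-target / telescoping argument along the active critical orbits controls the residual bad set, showing it has zero Lebesgue density at $f$.

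The main obstacle I anticipate is the multi-critical transversality: decoupling $c_i(\la)$ from $\ti c_i(\la)$ simultaneously for $i=1,\ldots,k$ is far more delicate than in the unicritical one-parameter setting of \cite{RL2} and requires the multi-parameter rigidity machinery of \cite{MA4}. A secondary technical difficulty is maintaining Koebe-type distortion for inverse branches of $f_\la$ along long pieces of critical orbits as the parameter varies, which depends on careful use of the holomorphic motion of $E$ combined with uniformity of the expansion on $E_\la$.
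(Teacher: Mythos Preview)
Your overall architecture --- holomorphic motion of the hyperbolic postcritical set, parameter functions measuring the discrepancy between the moving critical orbit and its motion-continuation, and Koebe-type transfer from phase space to parameter space --- matches the paper's. The genuine gap is the transversality step. You need $\Phi$ to be a \emph{submersion} at $\la=0$, and you hope this comes from the rigidity theory in \cite{MA4}/\cite{MA3}. But that theory only yields that the individual components $x_j(\la)=c_j(\la)-\ti c_j(\la)$ are not identically zero (equivalently: no invariant line field, hence no $J$-stable Misiurewicz map). Non-vanishing of each coordinate is far weaker than joint surjectivity of $D\Phi(0)$; there is no mechanism in the cited papers that rules out linear dependencies among the $dx_j(0)$, and in the presence of degenerate critical points or orbit relations such dependencies are a real concern. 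Without the submersion you cannot straighten $\Phi$ to coordinate projections, and the whole shrinking-target scheme that follows has no foundation.

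The paper sidesteps exactly this difficulty, and it is worth seeing how. Rather than proving simultaneous transversality, it slices the parameter space by one-dimensional disks $B(0,r)$ in almost every direction $v$; in such a slice each $x_j$ is a non-constant holomorphic function of one variable, which is all the ``transversality'' one needs. On a given slice the paper does not run a telescoping argument against a fixed open $W\sbs F(f)$: instead it invokes the fact that a Misiurewicz map with $J(f)\neq\hat{\C}$ has $\mu(J(f))=0$, so after blowing up a Whitney disk to the large scale $S$ via a single distortion lemma, \emph{most} of the image already sits in a compact piece of $F(f_0)$, which in turn lies in $F(f_a)$ for all nearby $a$. One then intersects over the finitely many active critical points inside the same one-dimensional disk, and a $J$-stability contradiction shows a posteriori that every $x_j$ is non-constant. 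This avoids both the submersion claim and the shrinking-target bookkeeping you anticipate; if you want to salvage your multi-parameter plan, you would need an independent argument for the rank of $D\Phi(0)$, which is not in the literature you cite.
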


The space of rational maps of degree $d$ is a complex manifold of dimension
$2d+1$. To prove Theorem \ref{mainthm} we will consider
$1$-dimensional balls around the starting map $f$.
If $B(0,r)$ is a $1$-dimensional ball in the parameter space of
rational maps of degree $d \geq 2$, then we can associate a direction
vector $v \in \P(\C^{2d})$ to $B(0,r)$, such that the plane in which $B(0,r)$ lies
can be parameterized by $\{ t v : t \in \C  \}$. In this case we say
that $B(0,r)$ has direction $v$.

Theorem \ref{mainthm} above follows directly from the following.

\begin{Main} \label{discthm}
Let $r > 0$ and $f_a$, $a \in B(0,r)$ be a $1$-dimensional family of rational
functions of degree $d \geq 2$ and suppose that $f=f_0$ is
Misiurewicz map for which $J(f) \neq \hat{\C}$. Then for almost all
directions $v$ of $B(0,r)$, $f$ is a Lebesgue density point
of hyperbolic maps in the ball $B(0,r)$.
\end{Main}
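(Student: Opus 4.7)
My strategy is to identify the critical orbits of $f$ lying in $J(f)$, perturb along direction $v$ so as to push them into the persistent attracting basins of $F(f)$, and show that the ``good'' set of parameters has density one at $0$ for almost every direction $v$.

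First, I would split $Crit(f) = Crit_F \sqcup Crit_J$ according to whether the critical point lies in $F(f)$ or $J(f)$. The Misiurewicz hypothesis together with the absence of parabolic cycles ensures that each $c \in Crit_F$ is attracted to an attracting cycle whose basin persists throughout a neighborhood of $0$. For every $c \in Crit_J$, the set $\om(c) \subset J(f)$ is disjoint from $Crit(f)$ and hence uniformly expanding; for $a$ in a small ball around $0$ it admits a holomorphic motion $h_a$ conjugating $f$ to $f_a$ on a neighborhood of $\om(c)$.

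Next, for each $c \in Crit_J$, I choose $N = N_c$ so that $f^N(c)$ sits in the domain of this motion, let $c(a)$ denote the holomorphically moving critical point of $f_a$, and define the holomorphic transversality function
\[
\xi_c(a) \;:=\; f_a^{N}\bigl(c(a)\bigr) - h_a\bigl(f^{N}(c)\bigr)
\]
on a neighborhood of $0$ in the $(2d{+}1)$-dimensional parameter space, with $\xi_c(0) = 0$. Call a direction $v$ \emph{bad for $c$} if $t \mapsto \xi_c(tv)$ vanishes identically. Provided $\xi_c \not\equiv 0$, the bad directions are contained in a proper analytic subvariety of $\P(\C^{2d})$ and hence have measure zero. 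To exclude $\xi_c \equiv 0$, I would argue by rigidity: if the critical orbit of $f_a$ coincided with the holomorphic motion of $\om(c)$ for every $a$ near $0$, then by propagating this motion one obtains a holomorphic motion of a substantial piece of $J(f)$, which by the methods developed in \cite{MA4} forces $f$ to be a flexible Latt\`es map --- contradicting $J(f) \neq \hat\C$.

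For a good direction $v$, write $\xi_c(tv) = t^{m_c}\eta_c(t)$ with $\eta_c(0) \neq 0$. The uniform expansion on a neighborhood of $\om(c)$ amplifies this drift: as long as $f_a^{N+k}(c(a))$ remains in the neighborhood, it separates from $h_a(f^{N+k}(c))$ at rate $\la^k$ for some $\la > 1$. After roughly $\log(1/|t|)$ iterates the critical orbit exits the neighborhood, and since $J(f) \neq \hat\C$ a standard transfer estimate (pushing the quasi-uniform distribution at exit scale through the dynamics and into the persistent attracting basins) shows that, within each disk $\{|t| < \rho\}$, the subset of $t$ for which every $c \in Crit_J$ lands in an attracting basin has relative measure tending to $1$ as $\rho \to 0$. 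This is precisely the density statement of the theorem along $v$; intersecting over the finitely many $c \in Crit_J$ and the measure-zero bad-direction sets completes the proof. The principal difficulty is the rigidity step: translating the coincidence $\xi_c \equiv 0$ into the Latt\`es conclusion requires importing and adapting the full classification argument of \cite{MA4}. A secondary technical point is the quantitative measure estimate in the last step, which requires careful control of exit times from the expanding neighborhood together with the geometry of the moving attracting basins, in order to obtain density one rather than mere positive density.
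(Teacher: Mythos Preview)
Your overall strategy coincides with the paper's: define transversality functions $\xi_c$ via the holomorphic motion of the postcritical hyperbolic set, show they are not identically zero, use expansion along the hyperbolic set together with a distortion lemma to blow small parameter disks up to a definite scale in phase space, and then use persistence of attracting basins to conclude that most parameters become hyperbolic.

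However, your rigidity step has a genuine gap. You argue that if $\xi_c \equiv 0$ for a \emph{single} $c\in Crit_J$, one can propagate the holomorphic motion of $\om(c)$ to ``a substantial piece of $J(f)$'' and invoke the Latt\`es classification. This does not follow: the identity $\xi_c\equiv 0$ for one critical point says nothing about the other critical orbits, so you do not obtain $J$-stability, and there is no route from a single persistently hyperbolic critical orbit to the Latt\`es conclusion. The paper handles this in two stages. First, from the fact that a Misiurewicz map carries no invariant line field on its Julia set (\cite{MA3}) one deduces only that \emph{not all} the $x_j$ vanish identically; one then works with the nonempty index set $I=\{j:x_j\not\equiv 0\}$ and restricts to directions $v$ that are generic for those $j\in I$. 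Second, \emph{after} the measure argument produces parameters $a$ for which every $c_j(a)$ with $j\in I$ lies in $F(f_a)$, one observes that if $I$ were a proper subset of $\{1,\dots,2d-2\}$, then for such $a$ the remaining critical points (those with $x_j\equiv 0$) still shadow the hyperbolic set $\La_a$, so $f_a$ would be a $J$-stable Misiurewicz map---again contradicting \cite{MA3}. Hence $I=\{1,\dots,2d-2\}$ \emph{a posteriori}. This bootstrap is the idea your plan is missing.

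On the measure step, $J(f)\neq\hat\C$ is not by itself enough for density one; the paper uses the folklore fact that a Misiurewicz map with $J(f)\neq\hat\C$ has $\mu(J(f))=0$. This is combined with the Whitney-disk distortion lemma (Lemma~\ref{initdist}, imported from \cite{MA3}), which guarantees that the image $\xi_{n}(D_0)$ reaches a \emph{fixed} scale $S>0$ with bounded distortion. Since $\mu(J(f))=0$, any disk of diameter $\geq S/2$ meets the persistent Fatou piece $E_\de$ in relative measure $\geq 1-\vep_1$ once $\de$ is small, and bounded distortion pulls this back to $D_0$. Your sketch gestures at this (``quasi-uniform distribution at exit scale'') but omits both the zero-measure input and the uniform lower bound $S$ on the exit scale, which are precisely what upgrade ``positive density'' to ``density one''.
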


We also note that combining \cite{RL2} with Theorem \ref{mainthm},
every Collet-Eckmann map for which the Julia set is not the whole
sphere can be approximated by a hyperbolic map. In particular, this
holds for all polynomial Collet-Eckmann maps. In view of \cite{RL2}
and \cite{MA} it seems natural that almost every Collet-Eckmann map
has its Julia set equal to the whole sphere.


\subsection*{Acknowledgements}
I am thankful to the referee for many useful remarks.
This paper was written at Mathematisches Seminar at
Christian-Albrechts Universit\"at zu Kiel.
The author gratefully acknowledges the hospitality of the department.

\section{Preliminary lemmas}

We will use the following lemmas by R. Ma\~n\'e.

\begin{Thm}[Ma\~n\'e's Theorem I] \label{mane}
Let $f:\hat{\C} \mapsto \hat{\C}$ be a rational map and $\La \subset J(f)$ a
compact invariant set not containing critical points or parabolic points. Then either $\La$ is a hyperbolic set or $\La \cap \omega(c) \neq \emptyset$
for some recurrent critical point $c$ of $f$.
\end{Thm}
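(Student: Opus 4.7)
The plan is to prove the contrapositive: assume that $\La \cap \om(c) = \emptyset$ for every recurrent critical point $c$, and deduce that $\La$ is hyperbolic, i.e.\ that there exist $C > 0$ and $\la > 1$ with $|(f^n)'(x)|_\sg \geq C\la^n$ in the spherical metric for every $x \in \La$ and every $n \geq 0$. First I would isolate $\La$ from the critical orbits: since $\La$ is compact, $\La \cap Crit(f) = \emptyset$, and $\La$ is disjoint from $\om(c)$ for every recurrent $c$, one may choose a neighborhood $U$ of $\La$ whose closure avoids every critical point and is disjoint from $\om(c)$ for every recurrent critical point. For a non-recurrent critical point $c$ one has $c \notin \om(c)$, so a sufficiently small neighborhood $W_c$ of $c$ is visited only finitely often by the forward orbit of $c$, and consequently $\bigcup_c W_c$ can be met only a uniformly bounded number of times by any backward orbit under $f$.

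The heart of the argument is a pullback lemma: there exist $r > 0$ and $D \in \N$ such that, for every $x \in \La$ and every $n \geq 0$, if $V_n(x)$ denotes the connected component of $f^{-n}(B(f^n(x), r))$ containing $x$, then $V_n(x) \sbs U$ and $\deg(f^n|_{V_n(x)}) \leq D$. Granting this, a Koebe-type bounded distortion estimate for proper maps of bounded degree shows that $|(f^n)'(x)|_\sg$ is comparable, up to a factor depending only on $D$, to $r / \mathrm{diam}\,V_n(x)$. Because the pullbacks all lie in the fixed region $U$ of finite spherical area, the summability $\sum_n \mathrm{area}(V_n(x)) < \infty$ forces $\mathrm{diam}\,V_n(x) \to 0$; a subadditive/first-return argument applied on the compact set $\La$ then sharpens this into exponential decay, uniform in $x \in \La$, which is precisely the required expansion.

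The main obstacle is establishing the degree bound $D$, since backward orbits can in principle pass arbitrarily close to critical points. The strategy is to count branchings along backward orbits: whenever $V_k(x)$ meets one of the critical neighborhoods $W_c$, the degree of $f|_{V_k(x)}$ grows by at most the local degree of $f$ at $c$, and the predecessor $V_{k-1}(x)$ must lie near the critical value $f(c)$. For non-recurrent critical points, the construction of the $W_c$ above provides a globally bounded number of such branchings. For recurrent critical points, one exploits the fact that $U$ is disjoint from $\om(c)$: the pullbacks of a disk centered in $U$ are eventually driven out of any prescribed neighborhood of $\om(c)$, so a Ma\~n\'e-type telescope argument together with a shrinking-disks estimate near $\om(c)$ bounds the number of times the backward orbit can revisit a neighborhood of $c$. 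Summing these contributions over the finitely many critical points produces a uniform $D$, after which the Koebe-based estimate of the previous paragraph gives the hyperbolic expansion on $\La$.
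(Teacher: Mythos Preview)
The paper does not supply its own proof of this statement: Ma\~n\'e's Theorem I is quoted as an external result, with a pointer to the alternative proof of Shishikura and Tan in \cite{ST}. There is therefore nothing in the paper to compare your argument against; your outline is broadly in the spirit of the standard proofs in the literature (bounded-degree pullbacks, Koebe-type distortion, and a shrinking/compactness upgrade to uniform expansion on $\La$).

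One point in your sketch deserves correction. The implication ``$V_n(x) \subset U$ for all $n$ $\Rightarrow$ $\sum_n \mathrm{area}(V_n(x)) < \infty$'' is not valid as stated, since the sets $V_n(x)$ for different $n$ are not pairwise disjoint and there is no a priori reason their areas should be summable. The usual way to obtain $\mathrm{diam}\,V_n(x) \to 0$ is a normal-families/contradiction argument: if some subsequence of diameters stayed bounded below, the corresponding inverse branches of $f^{n}$ on $B(f^{n}(x),r)$, which have uniformly bounded degree by your pullback lemma, would form a normal family and produce a nonconstant limit along an infinite backward orbit in $U$; this is incompatible with the backward orbit accumulating on $J(f)$ away from parabolic points and from $\om(c)$ for recurrent $c$. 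Once the shrinking is established, your final subadditivity/compactness step to promote pointwise decay to uniform exponential expansion on the compact invariant set $\La$ is the right conclusion.
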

\begin{Thm}[Ma\~n\'e's Theorem II] \label{mane2}
If $x \in J(f)$ is not a parabolic periodic point and does not intersect $\om(c)$ for some recurrent critical point $c$, then for every $\vep > 0$, there is a neighborhood $U$ of $x$ such that

\begin{itemize}
\item For all $n \geq 0$, every connected component of $f^{-n}(U)$ has diameter $\leq \vep$.

\item There exists $N > 0$ such that for all $n \geq 0$ and every connected component $V$ of $f^{-n}(U)$, the degree of $f^n |_V$ is $\leq N$.

\item For all $\vep_1 > 0$ there exists $n_0 > 0$, such that every connected component of $f^{-n}(U)$, with $n \geq n_0$, has diameter $\leq \vep_1$.

\end{itemize}
\end{Thm}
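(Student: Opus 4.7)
The strategy is to argue by contradiction, using Ma\~n\'e's Theorem I together with Riemann--Hurwitz. I will first establish the bounded-degree clause (the second bullet), then derive both diameter statements from Koebe distortion applied to pullbacks of bounded degree.

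For the bounded-degree clause, suppose to the contrary that for every neighborhood $U$ of $x$ one can find a component $V$ of some $f^{-n}(U)$ with $\deg(f^n|_V)$ arbitrarily large. Riemann--Hurwitz applied to $f^n \colon V \to U$ shows that if $\deg(f^n|_V) = d$ then $V$ contains at least $d - 1$ critical points of $f^n$ (counted with multiplicity); equivalently, for many pairs $(c, k)$ with $c \in \mathrm{Crit}(f)$ and $0 \leq k < n$, we have $f^{n-k}(c) \in U$. Since $\mathrm{Crit}(f)$ is finite, pigeonhole yields a single critical point $c_0$ and integers $j_\ell \to \infty$ with $f^{j_\ell}(c_0) \to x$, so $x \in \omega(c_0)$. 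By hypothesis $c_0$ is not recurrent, so $c_0 \notin \omega(c_0)$.

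I would then analyze $\omega(c_0)$ using Ma\~n\'e's Theorem I. It is a compact forward-invariant subset of $J(f)$, contains no parabolic periodic points (by hypothesis), and after a local argument near $c_0$ and the other non-recurrent critical points one can pass to a compact $f$-invariant subset $\Lambda \subset \omega(c_0)$ containing $x$ but disjoint from $\mathrm{Crit}(f)$. Ma\~n\'e I then offers two alternatives: either $\Lambda$ is hyperbolic, in which case inverse branches contract uniformly on a neighborhood of $\Lambda$ and so the degrees of pullbacks of a small $U \ni x$ are uniformly bounded --- a direct contradiction; or $\Lambda \cap \omega(c') \neq \emptyset$ for some recurrent critical $c'$. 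In the second case, a finite induction on the poset of critical orbits (using finiteness of $\mathrm{Crit}(f)$ together with the minimality properties of $\omega(c')$ for recurrent $c'$) eventually places $x$ inside $\omega(c'')$ for some recurrent $c''$, contradicting the hypothesis.

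With a uniform bound $\deg(f^n|_V) \leq N$ in hand, every inverse branch $(f^n|_V)^{-1}$ factors through a proper map of degree $\leq N$ over a round disk $U$; shrinking $U$ and applying a Koebe-type distortion estimate on a definite subdisk, the first bullet follows by taking $U$ small enough that distortion keeps $\mathrm{diam}(V) \leq \varepsilon$ for every pullback component at every iterate. For the third bullet, if diameters failed to shrink as $n \to \infty$, rescaling the inverse branches and passing to a subsequential limit would yield a non-constant holomorphic disk on which the iterates $\{f^n\}$ are normal, contradicting that the image disk accumulates on $x \in J(f)$. The main obstacle, I expect, is the second step above: extracting a contradiction from the bare intersection $\Lambda \cap \omega(c') \neq \emptyset$ is delicate, since this intersection need not contain $x$; the inductive reduction to a recurrent critical $c''$ with $x \in \omega(c'')$ is the technical heart of Ma\~n\'e's argument.
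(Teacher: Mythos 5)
The paper does not give a proof of this statement: it is Ma\~n\'e's theorem, stated for later use and cited, with a pointer to the alternative proof by Shishikura and Tan in \cite{ST}. So there is no ``paper proof'' to compare against, and the question is only whether your sketch would, if filled in, constitute a proof. Two of its steps are either wrong or unjustified as stated. First, the passage from $x \in \omega(c_0)$ (with $c_0$ non-recurrent) to a compact forward-invariant $\Lambda \subset \omega(c_0)$ containing $x$ and disjoint from $Crit(f)$ is not available: $\omega(c_0) \setminus Crit(f)$ is not closed, and the forward orbit closure of $x$, the natural candidate, may itself contain critical points --- nothing in the hypotheses forbids $f^m(x) \in Crit(f)$. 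The ``finite induction on the poset of critical orbits'' you invoke is precisely the technical heart of Ma\~n\'e's argument and cannot be treated as a routine reduction, since minimality of $\omega(c')$ for recurrent $c'$ does not propagate in an obvious way along a chain of non-recurrent critical points whose omega-limits nest.

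Second, the derivation of the first bullet from the degree bound via Koebe is not correct. Koebe-type estimates control a pullback component relative to an enclosing pullback component, but a degree bound alone gives no a priori bound on the absolute size of $V$: a proper map of degree $\leq N$ onto a small disk can have a domain of arbitrary diameter. In the actual proofs the degree bound and the diameter bound are established together by a compactness/contradiction argument --- essentially the rescaling-and-normal-families argument you sketch for the third bullet --- and even that step needs care, since $(f^n|_V)^{-1}$ is multivalued once the degree exceeds one, so one cannot simply ``rescale inverse branches'' without first passing to a ramified cover or a univalent restriction. What is correct, and is indeed the right opening move, is the Riemann--Hurwitz/pigeonhole observation that unbounded degree forces $x \in \omega(c_0)$ for some $c_0 \in Crit(f)$, which by hypothesis must be non-recurrent; but from there the argument is substantially harder than the sketch suggests.
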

An alternative proof of Ma\~n\'e's Theorem can also be found by L. Tan and M. Shishikura in \cite{ST}.
Let us also note that a corollary of Ma\~n\'e's Theorem II is that a Misiurewicz map cannot have any Siegel disks, Herman rings or Cremer points (see \cite{RM} or \cite{ST}).

For $k \geq 0$, define
\[
P^k(f) = \overline{ \bigcup_{n > k, c \in Crit(f) \cap J(f))} f^n(c)}.
\]

Given a Misiurewicz map $f$, there is some $k \geq 0$ such that
$P^k(f)$ is a compact, forward invariant subset of the Julia set which
contains no critical points.

By Ma\~n\'e's Theorem I, the set $\La = P^k(f)$ is hyperbolic.
It is then well-known that there is a holomorphic motion $h$ on $\La$:
\[
h: \La \times B(0,r) \raw \C.
\]
For each fixed $a \in B(0,r)$ the map $h=h(z,a)=h_a$ is an injection from
$\La$ to $h_a(\La)=\La_a$ and for fixed $z \in \La$ the map $h=h(z,a)$ is
holomorphic in $a$.

Each critical point $c_j \in J(f)$ moves holomorphically, if it is
non-degenerate (i.e. $c_j$ is simple), by the Implicit Function Theorem.
If it is degenerate, we have to use a new parameterisation to be able
to view each critical point as an analytic function of the
parameters. If the parameter space is $1$-dimensional one can
use the Puiseaux expansion (see e.g. \cite{BK} Theorem 1 p. 386). By
reparameterising using a simple base change of the form
$a \raw a^q$ for some integer $q \geq 1$, the critical points then
move holomorphically. In the multi-dimensional case, i.e. if we
consider the whole $2d-2$-dimensional ball $\B(0,r)$ in the parameter
space, a corresponding result is outlined
in \cite{MA4}. Here we restrict ourselves to just state the result (it is a complex analytic version of Lemma 9.4 in
\cite{Muller-Ricci}). There is a proper, holomorphic map $\psi: U \raw V$, where $U$
and $V$ are open sets in $\C^{2d-2}$ containing the origin, such that
$f'(z,a)$ can be written as
\[
f'(z,\psi(a)) = E (z-c_1(a)) \cdot \ldots \cdot (z-c_{2d-2}(a)),
\]
where each $c_j(a)$ is a holomorphic function on $U$
and $E$ is holomorphic and non-vanishing. We therefore assume that all
critical points $c_j$ on the Julia set moves holomorphically.

We know that for some $k \geq 0$ we have $v_j:= f^{k+1}(c_j) \in \La$ for
all $c_j \in Crit(f) \cap J(f)$. Thus we can define the parameter
functions
\[
x_j(a) = v_j(a) - h_a(v_j(0)).
\]

Let $\B(0,r)$ be a full dimensional ball in the parameter space of rational maps
around $f=f_0$.
Since we already know that Misiurewicz maps cannot carry an invariant
line field on its Julia set, (see \cite{MA3}), not all the functions $x_j$ can be
identically equal to zero in $\B(0,r)$.

\begin{Lem}
If $f$ is a Misiurewicz map then at least one $x_j$ is not identically equal
to zero in $\B(0,r)$.
\end{Lem}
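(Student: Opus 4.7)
The plan is to argue by contradiction: assume that every $x_j$ vanishes identically on $\B(0,r)$, and derive an invariant line field on $J(f)$, which contradicts \cite{MA3}. The hypothesis $x_j \equiv 0$ says precisely that the critical value $v_j(a) = f_a^{k+1}(c_j(a))$ lies in the moving hyperbolic set $\La_a = h_a(\La)$ for every $a \in \B(0,r)$, and moreover moves under the variation of $a$ according to the holomorphic motion $h$. Combined with the intertwining $f_a \circ h_a = h_a \circ f$ on $\La$, this means that the whole forward orbit of every critical point in $J(f)$ tracks the holomorphic motion, and hence the post-critical set $P(f)$ moves holomorphically with $a$ and the dynamics is conjugated along it.

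The next step is to promote $h$ to a holomorphic motion of the full Julia set that conjugates $f$ to $f_a$. Since the critical orbits stay inside $\La_a$ and move along $h$, the inverse branches of $f_a^n$ are well defined and depend holomorphically on $a$ on the complement of the moving post-critical set. Starting from the motion on $\La$ and pulling back through these inverse branches, one obtains a holomorphic motion $H : J(f) \times \B(0,r) \to \hat{\C}$ satisfying $H_a \circ f = f_a \circ H_a$ on $J(f)$. The diameter and bounded-degree estimates needed for this to be jointly continuous and holomorphic in $a$ are supplied by Ma\~n\'e's Theorem II (Theorem \ref{mane2}), which applies here because $f$ is Misiurewicz and so no recurrent critical points intervene.

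By the $\la$-lemma (Ma\~n\'e--Sad--Sullivan, extended by S{\l}odkowski), the motion $H$ extends to a holomorphic motion of $\hat{\C}$, and each $H_a$ is a quasiconformal homeomorphism conjugating $f$ to $f_a$ on $J(f)$. Since $\B(0,r)$ is full-dimensional in the $(2d+1)$-dimensional space of rational maps of degree $d$, this provides a full-dimensional family of quasiconformal conjugates of $f$. By the standard deformation theory of rational maps (McMullen--Sullivan), such a family forces the existence of either a nontrivial family of invariant Beltrami differentials supported on the Fatou set of $f$, or an invariant line field on $J(f)$. The Fatou-supported deformation space of a Misiurewicz map with $J(f) \neq \hat{\C}$ is finite-dimensional and cannot absorb a full-dimensional family of parameter directions, so some deformation must produce a nontrivial invariant line field on $J(f)$, contradicting \cite{MA3}.

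The main technical obstacle is the second step: the rigorous construction of the holomorphic motion on $J(f)$ starting from the motion on $\La$ and the assumption $x_j \equiv 0$. One must verify that the pullbacks through $f_a^{-n}$ combine into a well-defined holomorphic motion on a dense subset of $J(f)$ (hence on all of $J(f)$ by the $\la$-lemma), and this is where the Misiurewicz hypothesis, through Ma\~n\'e's Theorems \ref{mane} and \ref{mane2}, is essential. Once the holomorphic motion is in hand, the passage to the line field is a standard application of the quasiconformal deformation theory of rational maps.
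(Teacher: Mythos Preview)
Your approach is essentially the one the paper has in mind. The paper's ``proof'' is the single sentence preceding the lemma: Misiurewicz maps carry no invariant line field on $J(f)$ by \cite{MA3}, and all $x_j\equiv 0$ would contradict this. You have supplied the intermediary steps that the paper leaves implicit, namely that $x_j\equiv 0$ for all $j$ forces the postcritical dynamics to follow the holomorphic motion of $\La$, hence $f$ is $J$-stable on the full ball $\B(0,r)$, and then McMullen--Sullivan deformation theory converts this into an invariant line field on $J(f)$.

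Two small points. First, your phrase ``finite-dimensional and cannot absorb a full-dimensional family'' should be sharpened: what you need is that the Fatou-supported deformation space has dimension strictly less than $2d-2$, which follows because at least one critical point lies on $J(f)$ (the map is non-hyperbolic) and therefore does not contribute to the Teichm\"uller space of the attracting basins. Second, the pull-back construction of the motion on $J(f)$ via Ma\~n\'e's Theorem~\ref{mane2} is a valid route, but it is not strictly necessary: once you know the critical orbits persist along the motion of $\La$, Ma\~n\'e--Sad--Sullivan already gives $J$-stability directly, and from there the no-line-field theorem of \cite{MA3} (or equivalently the fact, also used later in the paper, that a $J$-stable Misiurewicz map contradicts \cite{MA3}) finishes the argument.
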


In fact, it follows a
posteriori, that every such $x_j$ is not identically zero. However, let us now
assume that $I$ is the set of indices $j$ such that $x_j$ is
not identically zero in $\B(0,r)$. We know that $I \neq \emptyset$. In
the end, we prove that in fact $I = \{1,\ldots,2d-2\}$.

Hence the sets $\{a: x_j(a) = 0 \}$, $j \in I$, are all analytic sets of
codimension $1$. Hence for almost all directions $v$ the funtions
$x_j$, $j \in I$ are not identically equal to zero in the corresponding disk
$B(0,r)$. From now on, fix such a disk $B(0,r)$ for some $r > 0$.

\begin{Def}
Given $0 < k < 1$, a
disk $D_0=B(a_0,r_0) \subset B(0,r)$ is a {\em $k$-Whitney disk} if
$|a_0|/r_0 = k$.

A Whitney disk is a $k$-Whitney disk for some $0 < k < 1$.
\end{Def}

We will now use a distortion lemma from \cite{MA3}, Lemma 3.5. In this
lemma we put $\xi_n=\xi_{n,j}$ and
\[
\xi_{n,j}(a) = f_a^n(c_j(a)),
\]
where $a \in B(0,r)$.
Moreover, choose some $\de' > 0$, such that $\NN$ is a fixed $10 \de'$-neighbourhood of $\La$ such that $\La_a \subset
\NN$ for all $a \in B(0,r)$ and $\dist(\La_a,\partial \NN) \geq
\de'$. This $\de' > 0$ shall be fixed throughout the paper and depends
only on $f$.

\begin{Lem} \label{initdist}
Let $\vep > 0$. If $r > 0$ is sufficiently small, there exists a number $0 <k <1$ only
depending on the function $x_j$, and a number
$S=S(\de')$, such that the following
holds for any $k$-Whitney disk $D_0=B(a_0,r_0) \subset B(0,r)$:
There is an $n > 0$ such that the set $\xi_{n}(D_0) \subset \NN$ and has diameter
at least $S$.
Moreover, we have low argument distortion, i.e.
\begin{equation}
\biggl| \frac{\xi_k'(a)}{\xi_k'(b)}  - 1 \biggr| \leq \vep,
\label{xinn}
\end{equation}
for all $a,b \in D_0$ and all $k \leq n$.
\end{Lem}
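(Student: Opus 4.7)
The plan is to decompose $\xi_n$ using the holomorphic motion $h$ and to exploit hyperbolic expansion on $\La = P^k(f)$. For $n \ge k+1$ we have $\xi_n(0) \in \La$, so we define the drift $\phi_n(a) := \xi_n(a) - h_a(\xi_n(0))$. The equivariance $h_a \circ f = f_a \circ h_a$ on $\La$ yields the recursion
\[
\phi_{n+1}(a) = f_a\bigl(h_a(\xi_n(0)) + \phi_n(a)\bigr) - f_a\bigl(h_a(\xi_n(0))\bigr),
\]
with initial data $\phi_{k+1}(a) = x_j(a)$. The proof then proceeds in two stages: first, bound from below the diameter of $\xi_n(D_0)$ using an initial estimate from the Whitney geometry together with hyperbolic expansion of the drift; second, bound the argument distortion via Koebe.

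For the initial lower bound, write $x_j(a) = a^q g(a)$ with $g(0) \neq 0$ and $q \ge 1$, the order of vanishing of $x_j$ at $0$. On a $k$-Whitney disk $D_0 = B(a_0, r_0)$ with $|a_0| = k r_0$, both the origin and a point of modulus $(k+1) r_0$ lie in $\overline{D_0}$, so
\[
\operatorname{diam}\bigl(x_j(D_0)\bigr) \ge c(g, q, k)\, r_0^q
\]
for a positive constant $c(g,q,k)$; the Whitney parameter $k$ is fixed in terms of $x_j$ so that this constant is under control. By hyperbolicity of $\La$ there exist $\la > 1$ and $C_0 > 0$ with $|(f^n)'(z)| \ge C_0 \la^n$ for $z \in \La$, and the same holds uniformly for $f_a$, $a \in B(0,r)$, on a fixed neighbourhood of $\La_a$, provided $r$ is small. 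As long as $\phi_n(D_0)$ remains in this neighbourhood, the recursion linearises and $\operatorname{diam}\phi_n(D_0)$ grows geometrically at rate at least some $\la' > 1$; let $n$ be the first time it exceeds a fixed small $S_0 = S_0(\de') < \de'$. Since $h$ is Lipschitz in $a$ on the compact set $\La$, we have $\operatorname{diam}\{h_a(\xi_n(0)) : a \in D_0\} \le C_1 r_0 \le S_0/2$ for $r$ small, so that $\xi_n(D_0) \subset \NN$ and $\operatorname{diam}\xi_n(D_0) \ge S_0/2 =: S$.

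For the argument distortion, every intermediate image $\xi_m(D_0)$, $m \le n$, has diameter $\le S_0$ and stays in $\NN$. Ma\~n\'e's theorem II provides univalent forward inverse branches of bounded degree for points of $\La$, and by continuity this persists for $a \in B(0, r)$, yielding a univalent extension of $\xi_m$ from $D_0$ to a concentric disk $B(a_0, \rho r_0)$ with $\rho > 1$ fixed. Koebe's theorem bounds $|\xi_m'(a)/\xi_m'(b) - 1|$ by a constant multiple of $\operatorname{diam}\xi_m(D_0)$ divided by the univalent radius of the image; telescoping along the orbit and summing the geometric series $\sum_{l \le m} \operatorname{diam}\xi_l(D_0) \le C' \operatorname{diam}\xi_m(D_0) \le C' S_0$ yields $|\xi_m'(a)/\xi_m'(b) - 1| \le C'' S_0 \le \vep$, once $S_0$ is chosen small enough in terms of $\vep$ (which in turn constrains $r$).

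The main obstacle will be the bootstrap between the linearisation of the drift recursion (valid only while $\phi_n(D_0)$ is small) and the univalent-extension hypothesis of Koebe (requiring the orbits of $\xi_m(D_0)$ to stay in the no-critical-point regime of Ma\~n\'e's theorem). These must be made consistent by fixing $S_0$ first to secure both the hyperbolic expansion and the distortion estimate, and then shrinking $r$ so that the initial drift is well below $S_0$ and the orbit remains in $\NN$ up to the stopping time.
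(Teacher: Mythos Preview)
The paper does not prove this lemma here; it is imported as Lemma~3.5 of \cite{MA3}, so there is no in-paper argument to compare against. Your decomposition $\xi_n(a)=h_a(\xi_n(0))+\phi_n(a)$, the recursion for the drift $\phi_n$ via the equivariance of the holomorphic motion, and the use of uniform hyperbolicity of $\La$ to make $\operatorname{diam}\phi_n(D_0)$ grow geometrically until a fixed scale is reached are all correct and are exactly the framework used in \cite{MA3}.

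The distortion paragraph, however, has a real gap. Ma\~n\'e's Theorem~II is a \emph{phase-space} statement: it bounds the degree and the diameter of components of $f^{-n}(U)$ for a fixed map $f$ and a small phase-space neighbourhood $U$. It says nothing about univalence of the \emph{parameter} map $a\mapsto\xi_m(a)=f_a^{\,m}(c_j(a))$ on an enlarged disk $B(a_0,\rho r_0)$, so it cannot justify applying Koebe to $\xi_m$. (The phrase ``univalent \ldots\ of bounded degree'' is also self-contradictory; Ma\~n\'e gives bounded degree, while univalence near $\La$ comes from hyperbolicity.) The obstruction is genuine: since $\xi_{m+1}(a)=f_a(\xi_m(a))$ is not an iterate of a single map, the chain rule gives $\xi_{m+1}'(a)=f_a'(\xi_m(a))\,\xi_m'(a)+(\partial_a f_a)(\xi_m(a))$, and the additive term prevents the telescoping sum $\sum_{l\le m}\operatorname{diam}\xi_l(D_0)$ from controlling $\xi_m'(a)/\xi_m'(b)$ on its own. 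What the argument in \cite{MA3} actually establishes is a comparison between $\xi_m'(a)$ and the phase-space derivative $(f_a^{\,m-k-1})'(\xi_{k+1}(a))$; it is precisely here that the Whitney ratio $k$ is tied to the order of vanishing of $x_j$ at $0$, so that on $D_0$ the initial map $x_j$ has controlled distortion and the comparison holds uniformly. Once that link is in place, the phase-space distortion bound on the hyperbolic set transfers to the parameter side. This parameter--phase comparison is the substantive content your sketch has not reproduced.
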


Hence, if $\vep$ is small, we have good geometry control of the shape
of $\xi_n(D_0)$ up to
the large scale $S > 0$, i.e. it is almost round. We will use the fact that this holds for
every $x_j$, $j \in I$.

\section{Conclusion and proof of Theorem \ref{discthm}}

We recall the following folklore lemma. For proofs see e.g. \cite{Nicu} (see also
\cite{JJ} for the case of polynomials).

\begin{Lem}
Let $f$ be a Misiurewicz map for which $J(f) \neq \hat{\C}$. Then the
Lebesgue measure of $J(f)$ is zero.
\end{Lem}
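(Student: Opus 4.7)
I would argue by contradiction: suppose $|J(f)| > 0$, pick a Lebesgue density point $x_0 \in J(f)$, and construct a sequence of scales $\rho_n \raw 0$ along which $|B(x_0,\rho_n) \cap F(f)|/|B(x_0,\rho_n)|$ is bounded below by a positive constant, contradicting the Lebesgue density theorem.

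\medskip

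\emph{Structural setup.} Since $f$ is Misiurewicz it has no parabolic cycles, and the corollary of Ma\~n\'e's Theorem~II already noted in the text also rules out Siegel disks, Herman rings and Cremer points. Together with Sullivan's no-wandering-domains theorem this forces every Fatou component to be eventually mapped into an (super)attracting basin, and $J(f) \no= \hat{\C}$ implies that $J(f)$ is nowhere dense. By Ma\~n\'e's Theorem~I, $\oli{P(f)} \cap J(f)$ is a compact hyperbolic invariant set, while $\oli{P(f)} \cap F(f)$ is countable (orbits of the finitely many critical points in $F(f)$ together with their attracting-cycle limits). In particular, I can fix a Fatou disk $D = B(y_0, \rho_0) \sbs F(f)$ with $\oli D \cap \oli{P(f)} = \emptyset$.

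\medskip

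\emph{Inverse branches.} Since $\oli D$ contains no critical value of any iterate of $f$, each connected component $V$ of $f^{-n}(D)$ is mapped biholomorphically onto $D$ by $f^n$. Two consequences follow: by the Koebe distortion theorem each such $V$ is a quasidisk of uniformly bounded eccentricity, so $|V| \geq c_1 \operatorname{diam}(V)^2$ for a universal $c_1 > 0$; and $f$ acts by strict inverse contraction on the hyperbolic Riemann surface $\hat{\C} \sm \oli{P(f)}$, so the Euclidean diameters of all components of $f^{-n}(D)$ tend uniformly to zero as $n \raw \infty$.

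\medskip

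\emph{Density-point contradiction.} The backward orbit of $y_0$ is dense in $J(f)$, and the family of components of $\bigcup_n f^{-n}(D)$ consists of bounded-eccentricity quasidisks whose diameters take every value in $(0,\rho_0]$ and whose union is a dense open subset of $\hat{\C}$. Hence for every small $\rho > 0$ there is a component $V$ of some $f^{-n}(D)$ with $V \sbs B(x_0, 2\rho)$ and $\operatorname{diam}(V) \asymp \rho$; then $V \sbs F(f)$ and $|V| \geq c_1 \operatorname{diam}(V)^2 \geq c_2 |B(x_0, 2\rho)|$, contradicting that $x_0$ is a density point of $J(f)$. The main obstacle is this ``scale-matching'' claim: one must arrange that at each scale $\rho$ the component $V$ is not merely near $x_0$ but also of diameter comparable to $\rho$. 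Bounded eccentricity of each component, uniform hyperbolic-metric contraction, and density of backward orbits combine to give a quasi-tiling of $\hat{\C} \sm \oli{P(f)}$ from which the required matching is extracted; the remaining ingredients are classical.
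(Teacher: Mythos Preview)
The paper does not actually prove this lemma: it is stated as a ``folklore lemma'' with proofs deferred to the references \cite{Nicu} and \cite{JJ}. So there is nothing in the paper to compare your argument against line by line; the question is whether your sketch stands on its own.

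Your structural setup (only attracting basins in $F(f)$, hyperbolicity of $\oli{P(f)}\cap J(f)$, choice of $D\sbs F(f)\sm\oli{P(f)}$, univalent pullbacks with Koebe control and shrinking diameters) is correct, and the density-point strategy is the standard one. A small slip: the components of $\bigcup_n f^{-n}(D)$ are countably many, so their diameters do not ``take every value in $(0,\rho_0]$'', and their union need not be dense in $\hat{\C}$ (it can miss large pieces of $F(f)$, e.g.\ near the attracting cycles); it only accumulates on all of $J(f)$. These are cosmetic.

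The substantive gap is exactly where you flag it. Pulling back a \emph{fixed} disk $D$ produces bounded-eccentricity quasidisks in $F(f)$ whose diameters tend to zero, but nothing in your argument forces one of them to land in $B(x_0,2\rho)$ with diameter $\asymp\rho$. In fact, if $J(f)$ \emph{did} have positive measure and $x_0$ were a genuine density point, then $|B(x_0,\rho)\cap F(f)|=o(\rho^2)$, so every Fatou quasidisk inside $B(x_0,2\rho)$ would automatically have diameter $o(\rho)$; your ``quasi-tiling'' sentence is therefore asserting the very porosity you are trying to establish. Density of backward orbits plus uniform hyperbolic-metric contraction do not supply the missing scale comparison.

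The classical repair is to iterate \emph{forward} from $x_0$ rather than backward from $D$. Using the hyperbolicity of $P^k(f)$ (Ma\~n\'e~I) together with Ma\~n\'e~II one obtains $\de>0$ and $N\in\N$ such that for every $x\in J(f)$ and every $n\ge0$ the component $W_n\ni x$ of $f^{-n}\bigl(B(f^n(x),\de)\bigr)$ maps onto $B(f^n(x),\de)$ with degree $\le N$ and $\operatorname{diam}(W_n)\raw0$. Since $J(f)$ is compact and nowhere dense there is a uniform $\de'>0$ with $B(y,\de)$ containing a Fatou ball of radius $\de'$ for every $y\in J(f)$. Bounded-degree Koebe then gives $|W_n\cap F(f)|\ge c\,|W_n|$ with $c>0$ independent of $n$, and since each $W_n$ is roughly a round disk \emph{centered at $x_0$} of radius $\asymp\operatorname{diam}(W_n)\raw0$, this contradicts $x_0$ being a density point of $J(f)$. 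The point is that $W_n$ is centered at $x_0$ by construction, so the scale-matching problem disappears.
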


For each critical point $c_j=c_j(0) \in J(f)$, $j \in I$ put
$D_j=\xi_{n_j,j}(D_0)$, where $n_j$ is the number $n$ in Lemma
\ref{initdist}. Hence for every $j$, we have that the diameter of
$D_j$ is at least $S$ and we have good control of the geometry, if
$\vep > 0$ is small in Lemma \ref{initdist}.

Next we prove the following lemma.
\begin{Lem} \label{green}
For each compact subset $K \subset F(f)$ there is a perturbation
$r=r(K)$ such that $K \subset F(f_a)$ for all $a \in B(0,r)$.
\end{Lem}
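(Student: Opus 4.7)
The plan is to exploit the classification of Fatou components for Misiurewicz maps. By the Misiurewicz definition, $f$ has no parabolic periodic points; by the corollary of Ma\~n\'e's Theorem II noted above, $f$ has no Siegel disks, Herman rings or Cremer points; and by Sullivan's no-wandering-domains theorem every Fatou component is preperiodic. Hence $F(f)$ is the union of the basins of a finite collection $\alpha_1, \ldots, \alpha_m$ of (super)attracting periodic cycles of $f$.

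For each $\alpha_i$ I would pick a trapping neighbourhood $V_i$ --- a finite union of small topological disks, one around each point of the cycle --- with $f(\oli{V_i}) \subset V_i$; this is possible since the multiplier $\la_i$ of $\alpha_i$ satisfies $|\la_i| < 1$. By the implicit function theorem each $\alpha_i$ persists as an attracting cycle of $f_a$ for $a$ small, and by continuity of $(a,z) \mapsto f_a(z)$ one can find $r_1 > 0$ so that $f_a(\oli{V_i}) \subset V_i$ for all $a \in B(0,r_1)$ and all $i$.

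Next, for each $x \in K$ the forward $f$-orbit of $x$ converges to some $\alpha_{i(x)}$, so there is an integer $n(x) \geq 0$ with $f^{n(x)}(x) \in V_{i(x)}$. By continuity of $f^{n(x)}$ a neighbourhood $W_x$ of $x$ satisfies $f^{n(x)}(W_x) \subset V_{i(x)}$. Compactness of $K$ gives a finite subcover $W_{x_1}, \ldots, W_{x_p}$. Using joint continuity of $(a,z) \mapsto f_a^n(z)$ on each compact set $\oli{W_{x_j}}$, one can shrink to some $0 < r \leq r_1$ so that $f_a^{n(x_j)}(\oli{W_{x_j}}) \subset V_{i(x_j)}$ for every $a \in B(0,r)$ and every $j \leq p$. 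Since each $V_i$ is trapping for $f_a$, the forward $f_a$-orbit of any point of $K$ enters and stays in $\bigcup_i V_i$ and converges to a perturbed attracting cycle, giving $K \subset F(f_a)$.

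The only genuinely non-routine input is the structural classification of $F(f)$ in the first paragraph; once that is in hand, the rest is a standard compactness and persistence argument. This is the step to treat with most care, particularly the invocation of Sullivan's theorem and the Ma\~n\'e-based exclusion of rotation domains and Cremer points, since without these the Fatou set could a priori contain components whose persistence under perturbation is not automatic.
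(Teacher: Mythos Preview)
Your argument is correct and follows essentially the same route as the paper: classify the Fatou components of a Misiurewicz map as basins of finitely many attracting cycles, choose forward-invariant trapping disks around these cycles that persist under perturbation, and then use compactness and continuity to ensure $K$ is mapped into the trapping region after finitely many iterates of $f_a$. The only cosmetic difference is that the paper chooses a single iterate count $n$ with $K \subset f_0^{-n}(D)$ and argues that $f_a^{-n}(D)$ moves continuously, whereas you pass to a finite cover with possibly different $n(x_j)$; both are equivalent packaging of the same compactness argument.
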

\begin{proof}
It follows from \cite{ST} and \cite{RM} that the only Fatou components for Misiurewicz maps are those
corresponding to attracting cycles. Recall that $f=f_0$. 

Given $K \subset F(f_0)$, there is some integer $n$ and some small disk $B_j \subset F(f_0)$
around each attracting orbit such that $K \subset f_0^{-n}(D)$, where $D= \cup_j B_j$. Choose $D$ such that $f_0(D) \subset D$. Since $f_a(D) \subset D$ for small perturbations $a \in B(0,r)$, we have $f_a^n(D) \subset D$ for all $n \geq 0$. Hence the family $\{ f_a^n \}_{n=0}^{\infty}$ is normal on $D$ and consequently $D \subset F(f_a)$ for any such parameter $a \in B(0,r)$. Moreover, $f_a^{-n}(D)$ moves continuously with the parameter, and therefore there is some $r > 0$ such that also $K \subset f_a^{-n}(D)$ for all $ a \in B(0,r)$. The lemma is proved.
\end{proof}

Let $\de > 0$. Define
\[
E_{\de}= \{ z \in F(f_0): dist(z,J(f_0)) \geq \de \}.
\]
Now, there is some $\de_0 > 0$ (depending only on $f=f_0$) such that
for
every $0 < \de < \de_0$ there exist an $r=r(\de) > 0$ such that $E_{\de}
\subset F(f_a)$ or every $a \in B(0,r)$, by Lemma \ref{green}.

Clearly, $r(\de) \raw 0$ as $\de \raw 0$. Since the Lebesgue measure
of $J(f_0)$ is zero, for every $\vep_1 > 0$
there is some $\de > 0$ such that the Lebesgue measure of the set $\{z :
dist(z,J(f_0)) \leq \de \}$ is less than $\vep_1$. Hence we conclude
that there exists some $\de > 0$ such that for every disk $D$ of diameter at
least $S/2$ ($S > 0$ is the large scale from Lemma \ref{initdist}) we have
\[
\frac{\mu(D \cap E_{\de})}{\mu(D)} \geq 1-\vep_1.
\]

For this $\de > 0$, there is some $r=r(\de) > 0$ such that also
$E_{\de} \subset F(f_a)$, for all $a \in B(0,r)$. Since every $D_j$
contains a disk of diameter $S/2$ (because of bounded distortion), we
therefore get

\[
\frac{\mu(D_j \cap E_{\de})}{\mu(D_j)} \geq 1-\vep_1',
\]
where $\vep_1'(\vep_1) \raw 0$ as $\vep_1 \raw 0$.
By Lemma \ref{initdist},
\[
\frac{\mu(\xi_{n_j,j}^{-1}(D_j \cap E_{\de})}{\mu(D_0)} \geq 1-C \vep_1',
\]
for some constant $C > 0$ depending on the $\vep$ in Lemma
\ref{initdist}. We have $C \raw 1$ as $\vep \raw 0$. Now every parameter $a \in
\xi_{n_j,j}^{-1}(D_j \cap E_{\de})$
has that $c_j(a) \in F(f_a)$. For every parameter $a$ in the set
\[
A = \bigcap_j \xi_{n_j,j}^{-1}(D_j \cap E_{\de}),
\]
the critical point $c_j(a) \in F(f_a)$. If $I \neq \{1,\ldots,2d-2
\}$, then there is a small neighbourhood around $a$ in the ball
$\B(0,r)$ where all $c_j(a) \in F(f_a)$ for $j \in I$ and, by
asssumption (since $x_j \equiv 0$ for $j \neq I$), the other $c_j(a)$
still lands at some hyperbolic set $\La_a$. This means that $f_a$ is a
J-stable Misiurewicz map. But this contradicts \cite{MA3}. Hence $I =
\{ 1, \ldots, 2d-2 \}$, so every $x_j$ is not identically zero.

Consequently, for every $a\in A$, every $c_j(a) \in F(f_a)$ and
it follows that $f_a$ is a hyperbolic map. Since $\vep_1 > 0$ can be
chosen arbitrarly small, the Lebesgue density of hyperbolic maps at
$a=0$ is equal to $1$ and Theorem \ref{discthm} follows.

\bibliographystyle{plain}
\bibliography{ref}

\end{document}